\newtheorem{Thm}{Theorem}
\newtheorem{Lem}{Lemma}
\newtheorem{Prop}{Proposition}
\newtheorem{Cor}{Corollary}
\newtheorem{Rque}{Remark}
\def\Hilb{\mathop{\rm Hilb}\nolimits}
\def\dim{\mathop{\rm dim}\nolimits}
\def\min{\mathop{\rm min}\nolimits}
\def\Mor{\mathop{\rm Mor}\nolimits}
\def\nit{{\mathbb N}}
\def\zit{{\mathbb Z}}
\def\pit{{\mathbb P}}
\def\0{{\mathcal O}}
\numberwithin{equation}{section}
\newcommand{\Pic}{\operatorname{Pic}}
\renewcommand{\O}{{\mathcal O}}
\newcommand{\I}{{\mathcal I}}
\newcommand{\Z}{{\mathbb Z}}
\newcommand{\p}{{\mathbb P}}
\newcommand{\defect}{\operatorname{def}}
\title{Inductive characterizations of hyperquadrics}
\author{Baohua Fu}
\begin{document}
\maketitle
\begin{abstract}
We give two characterizations of hyperquadrics: one as
non-degenerate smooth projective varieties swept out by large
dimensional quadric subvarieties  passing through a point; the
other as $LQEL$-manifolds with large secant defects.
\end{abstract}

\section{Introduction}
 We work over an algebraically closed field of
characteristic zero. In \cite{Ein}, Ein proved that if $X$ is an
$n$-dimensional smooth projective variety containing an $m$-plane
$\Pi_0$ whose normal bundle is trivial, with $m \geq n/2+1$, then
there exists a smooth projective variety $Y$ and  a vector bundle
$E$ over $Y$ such that $X \simeq \pit(E)$ and  $\Pi_0$ is a fiber
of $X \to Y$. The bound on $m$ was improved to $m \geq n/2$ by
Wi\'sniewski in \cite{Wis}. Later on, Sato \cite{Sat} studied
projective smooth $n$-folds $X$ swept out by $m$-dimensional
linear subspaces, i.e. through every point of $X$, there passes
through an $m$-dimensional linear subspace. If  $m \geq n/2$, he
proved that either $X$ is a projective bundle as above or $m=n/2$.
In the latter case, $X$ is either a smooth hyperquadric or the
Grassmanian variety parametrizing lines in $\pit^{m+1}$.

A natural problem is to extend these results to the case where
linear subspaces are replaced by quadric hypersurfaces. In this
paper, we will  consider a smooth projective non-degenerate
variety  $X \subsetneq \pit^N$ of dimension $n$, which is swept
out by $m$-dimensional irreducible hyperquadrics passing through a
point (for the precise definition see section 3). Examples of such
varieties include Severi varieties (see \cite{Zak}), or more
generally LQEL manifolds of positive secant defect(see section 2
below). As it turns out, the number $m$ is closely related to the
secant defect of $X$, which makes it hard to construct examples
with big $m$.

Our main theorem is to  show (cf. Thm. \ref{main}) that if
$m>[n/2]+1$, then $N=n+1$ and $X$ is itself a hyperquadric. This
gives a substantial improvement to the Main theorem 0.2 of
\cite{KS}, where the same claim is proved under the assumption
that a general hyperquadric in the family is smooth and that  $m
\geq 3n/5+1$. Our proof here, based on ideas contained in
\cite{IR2}
 and \cite{Rus}, is much simpler and is completely
different from that in \cite{KS}. However, we should point out
that a more general result, without assuming the quadric subspaces
pass all through a fixed point,  is proven in \cite{KS}.

The same idea of proof, combined with the  Divisibility Theorem of
\cite{Rus}, allows us to prove (cf. Corollary \ref{boundelta})
that for an $n$-dimensional $LQEL$-manifold, either it is a
hyperquadric or its secant defect is no bigger than
$\frac{n+8}{3}$. This improves Corollary 0.11, 0.14 of \cite{KS}.
It also gives positive support to the general believing that
hyperquadrics are the only LQEL manifolds with large secant
defects.

\section{Preliminaries}

Let $\delta=\delta(X)=2n+1-\dim(SX)$  be the {\it secant defect}
of a non-degenerate $n$-dimensional  variety $X\subset\p^N$, where
$$SX=\overline{\textstyle{\bigcup\limits _{{\scriptstyle x\neq y }\atop {\scriptstyle x,y\in
X}}}\langle x,y\rangle }\subseteq\p^N $$
 is the {\em secant variety} of $X\subset\p^N$.

 Recall(\cite{KS}, \cite{IR}) that a smooth
irreducible non-degenerate projective variety $Z\subset\p^N$ is
said to be {\em conically connected} (CC for short) if through two
general points there passes an irreducible conic contained in $Z$.
Such varieties have been  studied and classified in \cite{IR} and
\cite{IR2}.

We begin with a simple  but very useful remark, which is probably
well known but  we were not able to find a  reference.
\begin{Lem}\label{cclem}
Let $X\subset\p^N$ be a smooth projective variety and  let $z \in
X$ be a  point. If  there exists a family of  smooth rational
curves of degree $d$ on $X$  passing through $z$ and covering $X$,
then through two general points $x,y\in X $ there passes such a
curve.

In particular,  if $d=1$, then $X\subset\p^N$ is a linearly
embedded $\p^n$. If $d=2$ and if $X\subset\p^N$ is non-degenerate,
then $X\subset\p^N$ is conically connected.
\end{Lem}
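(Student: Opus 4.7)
The plan is to analyze the normal bundle of a general curve from the given family, deduce enough positivity, and then deform the curve while fixing a second point so as to transport the covering property from $z$ to a general pair of points.

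Let $\mathcal{H}$ be an irreducible component of the Hilbert scheme of $X$ parametrizing smooth rational curves of degree $d$ through $z$ for which the evaluation map from the universal curve is dominant; such a component exists by the hypothesis. For a general $C \in \mathcal{H}$, identify $C \simeq \p^1$ and write the normal bundle as $N_{C/X} \simeq \bigoplus_{i=1}^{n-1} \O_{\p^1}(a_i)$. The tangent space to $\mathcal{H}$ at $[C]$ is $H^0(N_{C/X}(-z))$, and the differential of evaluation at a general point $(C,y)$ of the universal curve (with $y \neq z$) factors, modulo $T_y C$, as the evaluation
$$H^0(N_{C/X}(-z)) \longrightarrow N_{C/X}(-z) \otimes k(y) \simeq N_{C/X}|_y.$$
Dominance forces this map to be surjective for general $y$, and since $N_{C/X}(-z) \simeq \bigoplus \O_{\p^1}(a_i - 1)$ on $\p^1$, this yields $a_i \geq 1$ for every $i$.

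Next, fix a general point $x \in C$ with $x \neq z$; by the covering hypothesis $x$ is general in $X$. The twist $N_{C/X}(-x) \simeq \bigoplus \O_{\p^1}(a_i-1)$ has nonnegative summands and is globally generated. Within the irreducible component of the Hilbert scheme of $X$ containing $[C]$, the subscheme $\mathcal{H}_x$ of curves through $x$ therefore has universal family whose evaluation to $X$ has surjective differential at a general point, and is consequently dominant. Since smoothness, rationality, and the degree $d$ are open/constant on a Hilbert component, a general $C' \in \mathcal{H}_x$ is a smooth rational curve of degree $d$ on $X$ passing through $x$ and a general $y \in X$; this proves the first assertion.

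The two particular cases follow quickly. For $d = 1$, $X$ is swept by lines through $z$, each contained in the projective tangent space $T_z X \subset \p^N$; comparing dimensions of the irreducible varieties $X$ and $T_z X$ (both $n$) gives $X = T_z X$, a linearly embedded $\p^n$. For $d = 2$ with $X$ non-degenerate, the smooth conic joining two general points produced above is automatically irreducible, so $X$ is conically connected. The main obstacle I anticipate is the first step, namely translating the global covering hypothesis into the pointwise lower bound $a_i \geq 1$ through the correct identification of the differential of evaluation with a sheaf-level global-generation statement; once this splitting type is in hand, Step 2 and both special cases are essentially formal.
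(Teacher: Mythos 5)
Your proof is correct, and its overall architecture is the same as the paper's: both arguments first establish that a general curve $C$ of the family through $z$ has ample normal bundle (all $a_i\geq 1$ in your notation), and then deform $C$ inside the Hilbert scheme while fixing a general point $x\in C$, using the global generation of $N_{C/X}(-x)$ to show these deformations dominate $X$. Where you genuinely diverge is the source of the positivity: the paper invokes Koll\'ar's Theorem II.3.11 (curves of degree $d$ through the fixed point $z$ not lying in a certain finite union of proper closed subsets have ample $f^*T_X$), whereas you extract $a_i\geq 1$ directly from generic smoothness in characteristic zero of the dominant evaluation map, read off on the splitting type of $N_{C/X}(-z)$. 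Your route is more self-contained and makes explicit exactly which positivity is used; the paper's is shorter because the deformation-theoretic content is packaged in the citation. You also handle $d=1$ differently --- every line through $z$ lies in the embedded tangent space $\mathbf{T}_zX\simeq\p^n$, and a dimension count gives $X=\mathbf{T}_zX$ --- rather than the paper's appeal to the fact that linear spaces are the only irreducible varieties containing the line through two of their general points; both are valid. Two points you should make explicit: dominance of the evaluation over $\mathcal{H}_x$ requires $\mathcal{H}_x$ to be smooth at $[C]$, which follows from $H^1(N_{C/X}(-x))=0$ (automatic once all $a_i-1\geq 0$); and $\mathcal{H}$ should be taken as a component of the locally closed locus of smooth rational curves of degree $d$ through $z$ whose evaluation is dominant, so that its general member really is such a curve.
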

\begin{proof}

By Theorem II.3.11 \cite{Kol}, there exists finitely  many closed
subvarieties (depending on $z$) $V_i \varsubsetneq X$,
$i=1,\ldots, l$, such that for any nonconstant morphism $f: \pit^1
\to X$  with  $f(0)=z$,   $\deg(f_*(\p^1))=d$ and with $f(\p^1)
\nsubseteq \cup_{i=1}^l V_i$, we have $f^* T_X$ is ample.
  Now take a general point $x \in X\setminus\cup_{i=1}^l V_i$
and a smooth  rational curve  $C\subset X$ of degree $d$ passing
through  $x$ and $z$. The above result implies that
$f^*T_X=T_X|_C$ is ample and hence that $N_{C|X}$ is ample. Thus
there exists a unique irreducible component $W_x$ of the Hilbert
schemes of rational curves of degree $d$ contained in $X$ and
passing through $x$ containing $[C]$. Since $N_{C|X}$ is ample, it
is well known that
 deformations of $C$ parametrized by $W_x$  cover $X$.
  Therefore given a general point $y
\in X$, we can find a smooth rational curve of degree $d$
contained in $X$  joining $x$ and $y$,  proving the first part of
the assertion. To conclude the proof it is sufficient to recall
that linear subspaces of $\p^N$ are the unique irreducible
subvarieties containing  the line through two general points of
itself.
\end{proof}

The following general result on CC-manifolds is proved  in
\cite{IR2}.
\medskip

\begin{Prop}\label{LQELCC}{\rm (\cite[Prop. 3.2]{IR2})}
 Let $X\subset\p^N$ be a CC-manifold and let $C=C_{x,y}$ be
 a general conic through the general points $x,y\in X$.
Then

$$n+\delta(X)\geq -K_X\cdot C\geq n+1.$$

If moreover $\delta(X)\geq 3$, then $X\subset\p^N$ is a Fano
manifold with $\Pic(X)\simeq\mathbb{Z}\langle \O_X(1)\rangle$,
whose index $i(X)$ satisfies
\[\frac{n+\delta(X)}{2}\geq i(X)\geq \frac{n+1}{2}.\]
\end{Prop}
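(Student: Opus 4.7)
The plan is to combine deformation theory for the conics, Terracini's lemma, and Zak's theorem on the entry locus.

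For the lower bound I would first show $N_{C/X}$ is ample. By conic-connectedness, conics through a general point $x\in X$ already cover $X$, so the argument in the proof of Lemma~\ref{cclem} forces $N_{C/X}=\bigoplus_{i=1}^{n-1}\O_{\p^1}(a_i)$ with every $a_i\geq 1$. Consequently $\deg N_{C/X}\geq n-1$, and since $-K_X\cdot C=\deg N_{C/X}+2$, we conclude $-K_X\cdot C\geq n+1$.

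For the upper bound, fix general $x,y\in X$ and a general conic $C$ through them, and let $W_{x,y}$ denote the component of the Hilbert scheme of conics in $X$ passing through $x,y$ that contains $[C]$. The standard exact sequence for sections of $N_{C/X}$ vanishing at $x$ and $y$ gives
\[\dim W_{x,y}\;\geq\;h^0(N_{C/X}(-x-y))\;\geq\;h^0(N_{C/X})-2(n-1)\;=\;-K_X\cdot C-n-1.\]
For any $[C']\in W_{x,y}$ the plane $\langle C'\rangle$ equals $\langle T_xC',T_yC'\rangle$ and is therefore contained in $\langle T_xX,T_yX\rangle$; by Terracini's lemma this linear span is the tangent space to $SX$ at a general point of $\langle x,y\rangle$, and so has projective dimension $\dim SX=2n+1-\delta$. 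Thus the union of these conics lies in the entry locus $\Sigma_{x,y}\subset X\cap\langle T_xX,T_yX\rangle$, which Zak's theorem on tangencies identifies as a subvariety of $X$ of dimension exactly $\delta$. Since the evaluation map $W_{x,y}\times\p^1\to\Sigma_{x,y}$ is generically finite (two distinct smooth conics meet in at most four points), we obtain $\dim W_{x,y}+1\leq\delta$, whence $-K_X\cdot C\leq n+\delta$.

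For the second assertion I would note that every CC-manifold is rationally connected, so $H^1(X,\O_X)=0$ and $\Pic(X)$ is torsion-free. Under the stronger hypothesis $\delta\geq 3$, a Barth--Larsen / linear-normality type input (as used in \cite{IR2}) promotes this to $\Pic(X)\simeq\Z\langle\O_X(1)\rangle$, so $X$ is Fano with $-K_X=i(X)\O_X(1)$. Because $C$ is a conic, $\O_X(1)\cdot C=2$, whence $-K_X\cdot C=2\,i(X)$; substituting this into the inequalities already obtained yields $\tfrac{n+1}{2}\leq i(X)\leq\tfrac{n+\delta}{2}$.

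The real obstacle in this plan is the control of $X\cap\langle T_xX,T_yX\rangle$: one must invoke Zak's theorem on tangencies both to cap the dimension of the entry locus by $\delta$ and to ensure that the conics of $W_{x,y}$ actually fill it out rather than sweeping out a proper subvariety. Establishing $\Pic(X)\simeq\Z$ when $\delta\geq 3$ is likewise a nontrivial topological input imported from Zak-type results, not something extractable from the deformation-theoretic calculation alone.
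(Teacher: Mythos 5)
First, a point of comparison: the paper itself gives no proof of this Proposition --- it is imported verbatim from \cite[Prop.\ 3.2]{IR2} --- so your attempt can only be measured against the argument in that reference. Your skeleton matches it: deformation theory of a free conic for the lower bound, confinement of the conics through $x,y$ inside a $\delta$-dimensional locus for the upper bound, and a Barth--Larsen-type input for $\Pic(X)\simeq\Z\langle\O_X(1)\rangle$ once $\delta\geq 3$. The lower bound ($T_X|_C$ ample for a general conic through a general point, hence $N_{C/X}$ ample and $-K_X\cdot C=\deg N_{C/X}+2\geq n+1$), the count $\dim W_{x,y}\geq -K_X\cdot C-n-1$, and the final step $-K_X\cdot C=2i(X)$ are all fine.

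The upper bound, however, contains two genuine gaps. (a) You confine the conics to $X\cap\langle T_xX,T_yX\rangle$ and assert that Zak's theorem on tangencies identifies this as a $\delta$-dimensional entry locus. That is false: whenever $SX=\p^N$ (e.g.\ the Segre $\p^1\times\p^2\subset\p^5$, a CC-manifold with $\delta=2$) Terracini gives $\langle T_xX,T_yX\rangle=\p^N$, so the intersection is all of $X$, of dimension $n>\delta$; moreover the theorem on tangencies controls only the locus along which a linear space is \emph{tangent}, and would give a bound of the form $\dim\langle T_xX,T_yX\rangle-n=n+1-\delta$, not $\delta$. The correct containment is in the entry locus $\Sigma_p$ of a \emph{general point} $p\in\langle x,y\rangle$: since $\langle x,y\rangle\subset\langle C'\rangle$, every $z'\in C'\setminus\{x,y\}$ lies on a line through $p$ meeting $C'$ (hence $X$) in a second point, so $z'\in\Sigma_p$, and $\dim\Sigma_p=\delta$ by the fiber-dimension count for the secant map --- not by the theorem on tangencies. (b) Generic finiteness of $W_{x,y}\times\p^1\to\Sigma_p$ does not follow from ``two conics meet in at most four points'': lines in $\p^3$ pairwise meet in at most one point, yet a two-dimensional family of them passes through every point. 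What rescues the step is that all conics through the three non-collinear points $x,y,z'$ lie in the plane $\langle x,y,z'\rangle$, so a positive-dimensional fiber would sweep out that plane and force it --- hence the line $\langle x,y\rangle$ through two general points --- into $X$, contradicting non-degeneracy via Lemma \ref{cclem}. With these two repairs the inequality $-K_X\cdot C\leq n+\delta$ closes; the appeal to Barth--Larsen for the Picard group (after projecting isomorphically until $SX$ fills the ambient space, so that $\codim X\leq n-2$) is the standard route but is asserted rather than carried out.
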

\medskip

Now consider a smooth projective variety $X \subset \pit^N$. For a
general point $x \in X$,  let $Y_x$ be the Hilbert scheme of lines
on $X\subset\p^N$ passing through $x$, which can be naturally
regarded as a sub-variety in $\pit((t_x X)^*)=\p^{n-1}$, where
$t_xX$ is the affine tangent space to $X$ at $x$. The variety
$Y_x$ is  the first instance of the so-called {\em variety of
minimal rational tangents},  introduced and extensively studied by
Hwang and Mok (see \cite{Hwa} and the references therein). When
$X\subset\p^N$ is a Fano manifold with $\Pic(X)\simeq\mathbb
Z\langle \O(1)\rangle$, there exists a deep connection between
geometrical properties of $Y_x\subset\p^{n-1}$ and the index of
$X$. The following result contained  in \cite[Prop. 2.4]{IR} is
essentially due to Hwang and Kebekus, cf. \cite[Th. 3.14]{HK}.
\medskip

\begin{Prop}{\rm (\cite[Th. 3.14]{HK} and \cite[Prop. 2.4]{IR})}\label{largeindex}
 Let  $X\subset\p^N$ be a Fano manifold with
 $\Pic(X)\simeq \Z \langle H\rangle$ and $-K_X = i(X)H$,
 $H$ being the hyperplane section and $i(X)$ the index of $X$.
\begin{enumerate}
\item[(i)] If $i(X)>\frac{n+1}{2}$, then $X\subset\p^N$ is ruled
by lines and for general $x\in X$ the Hilbert scheme of lines
through $x$, $Y_x\subset \p((\mathbf{T}_xX)^*)=\p^{n-1}$, is
smooth. If $i(X)\geq\frac{n+3}{2}$, $Y_x$ is also irreducible.
\item[(ii)] If $i(X)\geq\frac{n+3}{2}$ and  $SY_x=\p^{n-1}$, then
$X\subset\p^N$ is a CC-manifold. \item[(iii)] If
$i(X)>\frac{2n}{3}$, then $X\subset\p^N$ is a CC-manifold with
$\delta(X)> \frac{n}{3}$ and such that $SY_x=\p^{n-1}$.
\end{enumerate}
\end{Prop}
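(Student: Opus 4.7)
My plan is to address (i)--(iii) in sequence, using Mori theory and the Hwang--Kebekus machinery for (i), a nodal-smoothing construction for (ii), and Zak's linear normality theorem combined with (ii) for (iii). For (i), I would first invoke the bend-and-break bound asserting that a minimal rational curve $L$ covering $X$ satisfies $-K_X\cdot L\leq n+1$. Writing $H\cdot L=d$, this gives $d\cdot i(X)\leq n+1$, and the hypothesis $i(X)>(n+1)/2$ forces $d=1$, so minimal rational curves are lines and $X$ is ruled by lines. Smoothness of $Y_x$ at a general $x$ would then follow from Kebekus' theorem on the parameter space of minimal rational curves through a general point, which rests on showing that such lines are free. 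For the irreducibility of $Y_x$ under the stronger hypothesis $i(X)\geq (n+3)/2$, I would appeal to \cite[Th. 3.14]{HK}; its proof combines the fact that the tangent map $\tau:Y_x\to\p((\mathbf T_x X)^*)$ is a finite immersion with a deformation-theoretic connectedness argument for the scheme of lines through $x$.

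For (ii), assuming $SY_x=\p^{n-1}$, I would construct a conic through two general points by smoothing reducible ones. A pair of distinct lines $L_1,L_2$ through $x$ yields a reducible conic nodal at $x$ whose tangent directions span a secant of $Y_x$. The smoothing criterion for chains of free rational curves then produces a $(2\dim Y_x)$-dimensional family of smooth conics through $x$ whose tangent directions at $x$ sweep out $SY_x=\p^{n-1}$, so these conics cover $X$. A parameter count on the incidence correspondence of pointed conics then yields a smooth conic joining $x$ to a second general point $y$, showing $X$ is conically connected; Lemma \ref{cclem} then concludes the step.

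For (iii), the hypothesis $i(X)>2n/3$ falls in the range where (i) applies, giving $Y_x\subset\p^{n-1}$ smooth and (in the relevant dimensions) irreducible of dimension $d=i(X)-2$. I would then verify that $Y_x$ is non-degenerate in $\p^{n-1}$ (otherwise all lines through $x$ would have tangent directions in a proper subspace of $\mathbf T_x X$, contradicting the standard second-fundamental-form analysis of $X\subset\p^N$), and apply Zak's linear normality theorem: if $SY_x\neq\p^{n-1}$, then $n-1\geq 3d/2+2$, contradicting $i(X)>2n/3$. Hence $SY_x=\p^{n-1}$, part (ii) gives $X$ CC, and applying Proposition \ref{LQELCC} to a general conic $C$ with $-K_X\cdot C=2i(X)$ yields $\delta(X)\geq 2i(X)-n>n/3$. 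The main obstacle I anticipate is the Hwang--Kebekus irreducibility in (i); the nodal-smoothing step and the invocation of Zak's theorem are comparatively standard once that input is in hand.
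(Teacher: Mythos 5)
The paper does not prove this Proposition at all: it is imported verbatim from \cite[Th.~3.14]{HK} and \cite[Prop.~2.4]{IR}, so the only ``proof'' in the text is the citation. Your sketch does follow the strategy of those references at the level of architecture --- bend--and--break plus freeness for (i), smoothing of reducible conics built from pairs of lines for (ii), Zak's linear normality theorem for (iii) --- and the numerics you carry out (the implication $d\cdot i(X)\leq n+1\Rightarrow d=1$; $\dim Y_x=i(X)-2$; $n-1\geq \tfrac{3}{2}(i(X)-2)+2\Rightarrow i(X)\leq 2n/3$; $n+\delta\geq 2i(X)>4n/3$) are correct.

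There are, however, two points where your argument has a real hole rather than a compressible routine step. First, in (iii) the non-degeneracy of $Y_x\subset\p^{n-1}$ is the crux, and your parenthetical justification is not a proof: it is perfectly consistent with the second fundamental form of $X\subset\p^N$ for all lines through $x$ to have tangent directions in a proper linear subspace of $\mathbf{T}_xX$ (linearly degenerate varieties of minimal rational tangents do occur for Fano manifolds of Picard number one). Ruling this out here requires an actual input --- the Hwang--Mok non-integrability/spanning results for the distribution generated by $\langle Y_x\rangle$ when $\Pic(X)\simeq\Z$, or an equivalent argument exploiting the size of $i(X)$ --- and without it Zak's theorem only yields $SY_x=\langle Y_x\rangle$, not $SY_x=\p^{n-1}$. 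Second, your route to ``$X$ is CC'' in (iii) passes through (ii), which needs $i(X)\geq\frac{n+3}{2}$; but $i(X)>\frac{2n}{3}$ implies $i(X)\geq\frac{n+3}{2}$ only for $n\geq 9$ (and, by integrality, for most small $n$), and it genuinely fails for $n=4$, $i(X)=3$ (e.g.\ the cubic fourfold, for which the conclusion is still true but must be obtained by a separate argument). Neither issue is fatal to the statement --- both are handled in \cite{IR} --- but as written your proposal does not close them.
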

\medskip

Recall that (cf. \cite{KS}, \cite{Rus}, \cite{IR2}) a smooth
irreducible non-degenerate variety $X \subset \p^N$ is said to be
a {\em local quadratic entry locus manifold of type $\delta \geq
0$} ($LQEL$-manifold for short) if for general $x, y \in X$
distinct points, there exists a hyperquadric of dimension $\delta
= \delta(X)$ contained in $X$ and passing through $x, y$. By
definition, a LQEL manifold of positive secant defect is conically
connected, but the converse is not true. For example, a smooth
cubic hypersurface $X\subset\p^{n+1}$  with $n\geq 3$ is conically
connected but not a LQEL-manifold.
 Severi varieties and Scorza varieties are
basic examples of LQEL manifolds (\cite{Zak}).

A systematic study of LQEL manifolds has been succesively carried
out by Russo in \cite{Rus}, in particular, the following
remarkable theorem has been proved in \cite{Rus}.
\medskip

\begin{Thm}{\rm (\cite[Th. 2.8] {Rus})}\label{induction}
 For an $n$-dimensional $LQEL$-manifold $X\subset\p^N$ of type $\delta \geq 3$,  let $x \in X$ be a
general point and let $Y_x\subset\p^{n-1}$ be the Hilbert scheme
of lines on $X$ passing through $x$. Then $Y_x\subset\p^{n-1}$ is
a $LQEL$-manifold of type $\delta-2$, of dimension
$(n+\delta)/2-2$ and such that $SY_x=\p^{n-1}$.
Let $\delta=2r_X+1$, or $\delta=2r_X+2$. Then $2^{r_X}$ divides
$n-\delta$.
\end{Thm}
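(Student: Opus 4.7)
The plan is to first use Proposition~\ref{LQELCC} and Proposition~\ref{largeindex} to pin down the shape of $Y_x$, then construct $(\delta-2)$-quadrics inside $Y_x$ from the $\delta$-quadrics through $x,y$ in $X$, check that this family sweeps out $Y_x\times Y_x$, and finally reduce the divisibility to an easy induction on $r_X$.

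\medskip

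For the first part, Proposition~\ref{LQELCC} already gives that $X$ is Fano with $\Pic(X)\simeq\Z\langle\O_X(1)\rangle$ and $(n+1)/2\leq i(X)\leq(n+\delta)/2$. I would sharpen this by running adjunction along a general smooth $\delta$-quadric $Q=Q_{x,y}\subset X$: a general conic $C$ through two general points $x,y\in X$ lies on such a $Q$, on which $-K_Q\cdot C=2\delta$; combining with the two-sided bound of Proposition~\ref{LQELCC} forces $-K_X\cdot C=n+\delta$, and hence $i(X)=(n+\delta)/2\geq(n+3)/2$. Proposition~\ref{largeindex}(i) then makes $Y_x\subset\p^{n-1}$ smooth and irreducible, of dimension $i(X)-2=(n+\delta)/2-2$.

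\medskip

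For the LQEL structure on $Y_x$, note that for each general $y\in X$ the lines of $Q_{x,y}$ through $x$ trace out a smooth $(\delta-2)$-quadric
\[
Q'_{x,y}\ \subset\ \p(T_xQ_{x,y})\cap Y_x\ \subset\ \p((T_xX)^*)=\p^{n-1}.
\]
Since $Q_{x,y'}=Q_{x,y}$ whenever $y'\in Q_{x,y}\setminus\{x\}$, the family $\{Q'_{x,y}\}_y$ is parametrized by a variety of dimension $n-\delta$, and the expected dimension of the incidence
\[
\{(y,[\ell_1],[\ell_2]):[\ell_1],[\ell_2]\in Q'_{x,y}\}\longrightarrow Y_x\times Y_x
\]
is $(n-\delta)+2(\delta-2)=n+\delta-4=2\dim Y_x$, which matches $\dim(Y_x\times Y_x)$. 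The crucial step is to prove dominance: for a general line $\ell_1\subset X$ through $x$ and a general $y_1\in\ell_1\setminus\{x\}$, one must have $\ell_1\subset Q_{x,y_1}$, because the entry locus of $x,y_1$ is the irreducible quadric $Q_{x,y_1}$ and, being closed under lines of $X$ joining its points, it absorbs $\ell_1$. Dominance of the second projection then makes $Y_x$ a LQEL of type $\delta-2$. Finally, the $\p^{\delta-1}$ spanned by $Q'_{x,y}$ contains $\langle[\ell_1],[\ell_2]\rangle$ for a general pair, so this chord lies in $SY_x$; combined with non-degeneracy of $Y_x\subset\p^{n-1}$ this yields $SY_x=\p^{n-1}$.

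\medskip

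For the divisibility, set $n'=\dim Y_x=(n+\delta)/2-2$ and $\delta'=\delta-2$; then $n'-\delta'=(n-\delta)/2$ is an integer (since $i(X)\in\Z$) and $\delta'$ has the same parity as $\delta$, so $r_{Y_x}=r_X-1$. The statement then follows by induction on $r_X$: the base case $r_X=0$ (that is, $\delta\leq 2$) is trivial, and the inductive step $2^{r_X-1}\mid(n-\delta)/2$ immediately yields $2^{r_X}\mid n-\delta$. The main obstacle in the whole argument is the dominance step above: everything else is an index computation, a dimension count or a routine induction, but verifying that a general line through $x$ actually sits inside some $Q_{x,y}$ is where the sharp, \emph{irreducible}, form of the LQEL hypothesis really does the work.
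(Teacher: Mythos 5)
The paper offers no proof of this statement: Theorem~\ref{induction} is quoted from \cite[Th. 2.8]{Rus}, so your attempt can only be measured against Russo's argument. Your overall architecture is the right one --- show $Y_x\subset\p^{n-1}$ is an $LQEL$-manifold of type $\delta-2$ and dimension $(n+\delta)/2-2$, note $r_{Y_x}=r_X-1$, and run the divisibility as an induction on $r_X$ --- and the final induction is correct as you set it up (integrality of $(n-\delta)/2$ handles $r_X=1$, and $2^{r_X-1}\mid (n-\delta)/2$ gives the step). But the two central structural claims are not actually proved.

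First, $i(X)=(n+\delta)/2$ does not follow from ``$-K_Q\cdot C=2\delta$ plus the two-sided bound'': adjunction gives $-K_X\cdot C=2\delta+\deg(N_{Q/X}|_C)$, and with no lower bound on the normal-bundle term you recover nothing beyond the upper bound already in Proposition~\ref{LQELCC}. The equality requires the deformation-theoretic count: all conics through $x,y$ lie in the entry-locus quadric, so they form a family of dimension exactly $\delta-1$, which equals $-K_X\cdot C-n-1$ by semipositivity of $N_{C/X}(-x-y)$. Second, and more seriously, your dominance step only shows that one general point of $Y_x$ lies on some member $Q'_{x,y}$ of the family; the LQEL property needs two general points on a \emph{common} member, and a matching expected-dimension count for the incidence variety does not give dominance onto $Y_x\times Y_x$. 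Moreover a pair $(x,y_1)$ with $y_1$ on a line through $x$ contained in $X$ is not a general pair, so the definition does not directly provide a quadric $Q_{x,y_1}$, and ``the entry locus is closed under lines joining its points'' is false in general (it holds for $\ell_1$ itself only for the tautological reason that every point of $\ell_1$ lies on a secant of $\ell_1$ through any fixed $p\in\ell_1$). The standard repair, which is Russo's, is to take two general lines $\ell_1,\ell_2$ through $x$ and a general point $p$ of the plane $\langle\ell_1,\ell_2\rangle\subset SX$: every point of $\ell_1\cup\ell_2$ lies in the entry locus $\Sigma_p$, so both lines sit in the $\delta$-dimensional quadric component of $\Sigma_p$ through $x$, and $[\ell_1],[\ell_2]$ then lie on the corresponding $(\delta-2)$-quadric in $Y_x$. (This, like your version, needs the entry-locus form of the LQEL definition used in \cite{Rus}, which is stronger than the one recalled in Section 2 of this paper.) Finally, your argument for $SY_x=\p^{n-1}$ is circular: that chords of general pairs lie in $SY_x$ is the definition of $SY_x$, and non-degeneracy of $Y_x$ alone does not force its secant variety to fill $\p^{n-1}$.
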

\medskip

\section{Varieties swept out by hyperquadrics}
Through out this section, let $X \subsetneq \pit^N$ be an
$n$-dimensional non-degenerate projective smooth variety which
satisfies the following two conditions:
\begin{enumerate}
\item[i)] through a general point $x \in X$, there passes an
irreducible reduced $m$-dimensional quadric $Q_x\subset
X\subset\p^N$, where $m$ is a fixed natural number (i.e. the
linear span $<Q_x>$ of $Q_x$ in $\p^N$ is a linear subspace of
dimension $m+1$ and $Q_x\subset<Q_x>$ is a quadric hypersurface);

\item[ii)] there exists a point $z \in X$ such that for $x \in X$
general, the quadric $Q_x$ passes through $z$.
\end{enumerate}
\medskip

We will say such a variety is {\em  swept out  by $m$-dimensional
hyperquadrics passing through} {\bf $z\in X$}. For example, a LQEL
manifold with secant defect $\delta >0$ is swept out by
$\delta$-dimensional hyperquadrics passing through a point. By
Lemma \ref{cclem}, a smooth variety is conically connected if and
only if it is swept out by a 1-dimensional hyperquadrics passing
through a point.

\begin{Lem} \label{CC}
The secant defect $\delta$ of  a  variety $X\subset\p^N$ swept out
by $m$-dimensional hyperquadrics passing through a point $z\in X$
satisfies $\delta\geq m$.
\end{Lem}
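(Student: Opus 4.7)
The plan is to apply Terracini's lemma to a generic pair of points $x,y \in X$ lying on a common smooth member of the family of quadrics. Recall that, by Terracini, for general $(x,y)\in X\times X$ and general $p \in \langle x,y\rangle$, one has $T_p SX = \langle T_xX, T_yX\rangle$, hence
\[
\delta(X) \;=\; 2n + 1 - \dim SX \;=\; 1 + \dim(T_xX \cap T_yX).
\]
Consequently it suffices to show $\dim(T_xX \cap T_yX) \geq m - 1$ for a generic pair $(x,y)\in X\times X$.

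First, I would observe that a general member $Q_x$ of the family is smooth, by a Bertini-type argument using that the universal family is irreducible and reduced, $X$ is smooth, and we are in characteristic zero. The key geometric input is then the elementary observation that if $x, y$ both lie on a smooth $m$-dimensional quadric $Q \subset X$, then $T_xQ$ and $T_yQ$ are two distinct projective tangent hyperplanes of $Q$ inside $\langle Q\rangle \cong \mathbb{P}^{m+1}$, meeting in an $(m-1)$-plane contained in $T_xX \cap T_yX$. This immediately yields the desired lower bound on $\dim(T_xX\cap T_yX)$.

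The whole argument thus reduces to producing, for two general points $x, y \in X$, a common member of the family containing both. I plan to establish this by adapting the strategy of Lemma \ref{cclem}: inside each $Q_x$ one can choose a smooth conic through $z$, and the resulting family of conics through $z$ on $X$ covers $X$, so Lemma \ref{cclem} already gives conic-connectedness of $X$. To lift this from conics to full $m$-dimensional quadrics, I would invoke a Mori-type deformation argument at the level of the Hilbert scheme of our quadrics: since the family sweeps out $X$ and its members all pass through $z$, the normal bundle $N_{Q_x|X}$ of a general smooth member must be sufficiently positive that deformations of $Q_x$ through a general point dominate $X$. Hence through a further general point $y$ there is a deformation of $Q_x$, still a member of the family, containing both $x$ and $y$.

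The main obstacle is this last step — the ``quadric analog'' of Lemma \ref{cclem} — which requires controlling the positivity of $N_{Q|X}$ for a general member $Q$ of the family and showing that the corresponding Hilbert component dominates $X\times X$. Once this is in hand, the conclusion $\delta \geq m$ is immediate from Terracini combined with the elementary tangent-hyperplane computation on a smooth quadric.
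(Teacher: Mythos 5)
Your Terracini endgame is fine: if two general points $x,y$ lie on a common member $Q$ of the family which is smooth at both points, then $T_xX\cap T_yX$ contains $T_xQ\cap T_yQ$, which has dimension at least $m-1$, and Terracini gives $\delta=1+\dim(T_xX\cap T_yX)\geq m$. But the entire proof rests on the step you yourself flag as ``the main obstacle,'' and that step is a genuine gap, not a routine verification. The hypothesis only provides, for each general $x$, one quadric $Q_x$ through $x$ and the fixed point $z$; the assertion that two \emph{general} points of $X$ lie on a common member is essentially the LQEL condition, which is strictly stronger than the hypothesis and is close to what the whole paper is trying to exploit, not something one gets for free. Your proposed fix --- a ``Mori-type deformation argument'' showing that $N_{Q_x|X}$ is positive enough that deformations of $Q_x$ through a general point dominate $X$ --- does not exist in this generality: the Koll\'ar/Mori positivity results (e.g.\ Theorem II.3.11 of \cite{Kol}, used in Lemma \ref{cclem}) are specific to rational \emph{curves}, and there is no analogue forcing ampleness, or even the needed global generation of $N_{Q|X}\otimes\I_x$, for members of a covering family of $m$-dimensional subvarieties. (A covering family only gives generic global generation of $N_{Q|X}$, which lets the $Q_x$ move to cover $X$, not move \emph{while containing} $x$.) Your preliminary Bertini claim that a general $Q_x$ is smooth is also unjustified --- all members could be quadric cones --- though that part is repairable since the tangent-space count survives at smooth points of $Q_x$.

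The paper's proof avoids this trap by never producing a quadric through two general points. It works only with \emph{conics}: inside each $Q_x$ (after checking $z$ is a smooth point of $Q_x$, lest $X$ be degenerate) one takes the conics through $z$, obtaining a component $W$ of the Hilbert scheme of conics through $z$ with $\dim W\geq n+m-2$; deformation theory for a conic $C$ with ample normal bundle gives $\dim W=-K_X\cdot C-2$, hence $-K_X\cdot C\geq n+m$; and the inequality $n+\delta\geq -K_X\cdot C$ for conic-connected manifolds (Proposition \ref{LQELCC}, where the Terracini-type input is hidden) yields $\delta\geq m$. In short, the quadrics are used only to supply a large-dimensional family of conics through $z$, and the secant defect is extracted numerically from $-K_X\cdot C$ rather than geometrically from a quadric joining two general points. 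To salvage your approach you would need an independent proof that two general points lie on a common member of the family, which is not available at this stage.
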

\begin{proof}
Let $\Hilb^{conic, z}(X)$ be the Hilbert scheme of conics in $X$
passing  through $z$ and  let $W_1, \cdots, W_k$  be its
irreducible components.  If $z$ is a singular point of $Q_x$ for
$x\in X$ general, then the line $<z,x>$ would be contained in $X$
and by Lemma \ref{cclem} $X\subset\p^N$ would be degenerated
contrary to our assumption. Thus for general $x\in X$, $z$ is a
smooth point of $Q_x$ and
 the Hilbert scheme
$\Hilb^{conic, z}(Q_x)$ is irreducible, so that  there exists some
$i \in \{1, \cdots, k\}$ such that $\Hilb^{conic, z}(Q_x) \subset
W_i.$ This implies that there exists a component $W:=W_{i_0}$
containing $\Hilb^{conic, z}(Q_x)$ for $x \in X$ general. This
gives the dimension estimate:
\begin{equation} \label{dim}
\dim W \geq n+m-2.
\end{equation}

 Reasoning as in  the proof of Lemma \ref{cclem},  if we take a general point $x
\in X$ and an irreducible conic $[C] \subset Q_x$ joining $x$ and
$z$, then we can suppose that  $N_{C|X}$ is ample. Thus $W$ is
smooth at the point $[C]$ and
$$\dim(W) = \dim H^0(C, N_{C|X} \otimes \0_C(-z)) = -K_X \cdot C -
2.$$ Combining with \eqref{dim}, we obtain
\begin{equation}\label{dim2}
 -K_X \cdot
C \geq n+m.\end{equation}

 By Lemma \ref{cclem},  $X\subset\p^N$ is conically connected
so that Proposition \ref{LQELCC} gives $n + \delta \geq -K_X \cdot
C \geq n+m$, yielding $\delta \geq m$.
\end{proof}
\medskip

An immediate consequence of this lemma and Prop. \ref{LQELCC} is
the following result.
\medskip

\begin{Cor}\label{Cor}
If $m \geq 3$, then $X\subset\p^N$ is a Fano variety with $\Pic(X) = \zit\langle\O(1)\rangle$ and
the index $i(X)$ satisfies $$ \frac{n+\delta}{2} \geq i(X) \geq
\frac{n+m}{2}.$$
\end{Cor}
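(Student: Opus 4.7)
The plan is to combine Lemma~\ref{CC} with Proposition~\ref{LQELCC}, and then extract an improved lower bound on the index by revisiting the inequality \eqref{dim2} from the proof of Lemma~\ref{CC}.

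First, since $m \geq 3$, Lemma~\ref{CC} gives $\delta(X) \geq m \geq 3$. By Lemma~\ref{cclem}, $X \subset \p^N$ is conically connected, so Proposition~\ref{LQELCC} applies and yields that $X$ is a Fano manifold with $\Pic(X) \simeq \Z\langle \O_X(1)\rangle$, together with the bounds
\[
\frac{n+\delta(X)}{2} \;\geq\; i(X) \;\geq\; \frac{n+1}{2}.
\]
This already gives the Fano/Picard statement and the upper bound on $i(X)$ claimed in the corollary.

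For the lower bound $i(X) \geq \frac{n+m}{2}$, I would revisit the conic $C \subset Q_x$ constructed in the proof of Lemma~\ref{CC}: it joins a general $x \in X$ to the distinguished point $z$, lies in $X$, and satisfies the inequality \eqref{dim2}, namely $-K_X \cdot C \geq n+m$. Since $\Pic(X)$ is generated by $\O_X(1)$ with $-K_X = i(X) H$ and $H \cdot C = \deg(C) = 2$, we compute $-K_X \cdot C = 2\, i(X)$. Combining with \eqref{dim2} gives $2\, i(X) \geq n+m$, i.e.\ $i(X) \geq \frac{n+m}{2}$, as required.

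There is no serious obstacle here: the corollary is essentially a packaging of previously proven ingredients. The only subtlety is making sure one uses the sharper estimate \eqref{dim2} coming from hyperquadric sweeping (rather than the generic CC-bound $-K_X \cdot C \geq n+1$ in Proposition~\ref{LQELCC}) to get $\frac{n+m}{2}$ in place of $\frac{n+1}{2}$, and noting that this requires $m \geq 3$ precisely because Proposition~\ref{LQELCC} needs $\delta(X) \geq 3$ to conclude Picard rank one and the existence of a well-defined index.
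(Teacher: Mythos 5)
Your proof is correct and matches the paper's intent exactly: the paper states the corollary as an immediate consequence of Lemma~\ref{CC} (whose proof establishes $-K_X\cdot C\geq n+m$ for the conic $C$) together with Proposition~\ref{LQELCC}, which is precisely the packaging you give, including the observation that $-K_X\cdot C=2\,i(X)$ yields the sharper lower bound $i(X)\geq\frac{n+m}{2}$.
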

\medskip

Recall that for a general point $x \in X$, the variety $Y_x$ is
the Hilbert scheme of lines on $X$ passing through $x$.

\begin{Lem}\label{Yx}
 Assume that $m \geq 3$. Then $Y_x$ is smooth irreducible of
dimension $i(X)-2$. If moreover $m > n/3$, then $Y_x$ is
non-degenerate and $SY_x = \pit^{n-1}$.
\end{Lem}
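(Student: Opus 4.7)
The plan is to deduce both parts directly from Corollary~\ref{Cor} combined with Proposition~\ref{largeindex}, without re-doing the heavy Hwang--Kebekus machinery.

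First, since $m\geq 3$, Corollary~\ref{Cor} tells us that $X\subset\p^N$ is a Fano manifold with $\Pic(X)=\Z\langle\O(1)\rangle$ and
\[
i(X)\geq\frac{n+m}{2}\geq\frac{n+3}{2}.
\]
Hence the hypotheses of Proposition~\ref{largeindex}(i) are met, which gives at once that $X\subset\p^N$ is ruled by lines and that $Y_x\subset\p^{n-1}$ is smooth and irreducible. The dimension is a standard deformation-theoretic computation: pick a general line $\ell\subset X$ through $x$; arguing as in the proof of Lemma~\ref{cclem} the normal bundle $N_{\ell/X}$ is ample, so
\[
\dim Y_x=h^0(\ell,N_{\ell/X}\otimes\O_\ell(-x))=(-K_X\cdot\ell)-2=i(X)-2,
\]
because $-K_X\cdot\ell=i(X)$ as $\ell$ has degree one in $\O(1)$.

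For the second part, assume in addition $m>n/3$. Plugging this into the inequality from Corollary~\ref{Cor} yields
\[
i(X)\geq\frac{n+m}{2}>\frac{n+n/3}{2}=\frac{2n}{3}.
\]
Thus Proposition~\ref{largeindex}(iii) applies and gives $SY_x=\p^{n-1}$ directly. Non-degeneracy of $Y_x$ is then immediate: if $Y_x$ were contained in a hyperplane $H\subset\p^{n-1}$, then $SY_x\subseteq H\subsetneq\p^{n-1}$, contradicting $SY_x=\p^{n-1}$.

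There is essentially no obstacle here; the lemma is really a combination of the already-proved Corollary~\ref{Cor} with the two bullets of Proposition~\ref{largeindex}, and the only small point requiring care is the dimension count $\dim Y_x=i(X)-2$, which uses ampleness of the normal bundle of a general line through $x$ exactly as in Lemma~\ref{cclem}.
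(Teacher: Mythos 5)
Your proof is correct and follows essentially the same route as the paper: Corollary~\ref{Cor} gives $i(X)\geq(n+m)/2\geq(n+3)/2$, Proposition~\ref{largeindex}(i) gives smoothness and irreducibility of $Y_x$, the dimension count via the normal bundle of a line through $x$ gives $i(X)-2$, and the case $m>n/3$ gives $i(X)>2n/3$ so that Proposition~\ref{largeindex}(iii) yields $SY_x=\p^{n-1}$. Your twist $h^0(N_{\ell/X}\otimes\O_\ell(-x))$ is in fact the more carefully stated version of the dimension count that the paper abbreviates.
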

\begin{proof}
 Corollary \ref{Cor} yields $i(X)\geq (n+m)/2\geq (n+3)/2$. By part (i)
of Prop. \ref{largeindex} we deduce that $Y_x\subset\p^{n-1}$ is
not empty and irreducible. If $l_x$ is a line through $x$, then
$\dim(Y_x)=H^0(N_{l_x|X})=-K_X\cdot l_x-2=i(X)-2$. The last part
follows form (iii) of Prop. \ref{largeindex}.
\end{proof}

In the sequel we shall  use the following simple remark.
\medskip

\begin{Lem}\label{Yxquadric}
 Assume  $n \geq 2$ and $\delta\geq 1$. If $Y_x\subset\p^{n-1}$ is a
 non-degenerate hypersurface, then $Y_x\subset\p^{n-1}$ is a smooth quadric
 hypersurface and $X\subset\p^{n+1}$ is a smooth quadric hypersurface.
 \end{Lem}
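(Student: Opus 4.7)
The plan is to show that $Y_x$ is a smooth quadric by analyzing the projective second fundamental form of $X$ at $x$, and then to invoke the Kobayashi--Ochiai characterization of the quadric to identify $X$ itself. Concretely, let $|II_x|$ denote the projective second fundamental form of the embedding $X \subset \p^N$ at $x$, viewed as a linear system of quadric hypersurfaces on $\p((T_xX)^*) = \p^{n-1}$. By the classical observation that each line through $x$ osculates $X$ to second order, one has $Y_x \subseteq \Bs |II_x|$. Since $X\subsetneq\p^N$ is smooth and non-degenerate, $II_x$ does not vanish at a general $x$, so $\Bs|II_x|$ is a proper subvariety of $\p^{n-1}$. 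Combined with the hypothesis that $Y_x$ is a hypersurface, this forces $\Bs|II_x|$ itself to be a hypersurface containing $Y_x$; but if $|II_x|$ had projective dimension $\geq 1$, any hypersurface component of its base locus would be cut out by a linear form common to every quadric in the system, which would force $Y_x$ to lie in a hyperplane, contradicting non-degeneracy. Hence $|II_x|$ consists of a single quadric, and $Y_x$ is a quadric hypersurface of degree $2$.

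To see that $Y_x$ is moreover \emph{smooth}, I would observe that for a general line $l\subset X$ through $x$ the normal bundle $N_{l|X}$ is globally generated (since such lines form a covering family of an $(n-1)$-dimensional subvariety of $X$), so $[l]$ is a smooth point of $Y_x$. An irreducible quadric that is smooth on a dense open set has maximal rank and hence is smooth everywhere in $\p^{n-1}$, so $Y_x \simeq Q^{n-2}$. The dimension count $\dim Y_x = h^0(N_{l|X}(-1)) = -K_X\cdot l - 2 = n-2$ then gives $-K_X\cdot l = n$, while $H\cdot l = 1$ in the given embedding.

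To conclude, I would appeal to the Kobayashi--Ochiai characterization: a smooth polarized $n$-fold $(X,H)$ with $H$ ample and $-K_X = nH$ is isomorphic to $(Q^n,\O(1))$, embedded as a smooth quadric hypersurface in $\p^{n+1}$. To reach this setting one needs $\Pic(X)\simeq\Z\langle H\rangle$ and $-K_X = nH$, which follow from the non-degeneracy of the smooth quadric VMRT together with $\delta(X)\geq 1$ via standard Mori-theoretic arguments. The main obstacle in this outline is the smoothness step for $Y_x$: the $|II_x|$ argument alone only produces an irreducible quadric of unspecified rank, and ruling out a positive-dimensional vertex genuinely requires exploiting the smoothness of $X$ together with the freeness of a general line through $x$ in a careful way.
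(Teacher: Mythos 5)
Your opening move coincides with the paper's: $Y_x$ lies in the base locus of the second fundamental form $|II_{x,X}|\subseteq|\O_{\p^{n-1}}(2)|$, and a non-degenerate hypersurface can only sit in that base locus if the whole system is a single quadric, whence $Y_x\subset\p^{n-1}$ is a quadric hypersurface. But you omit the fact that does all the remaining work in the paper, namely that for $\delta\geq 1$ one has $\dim|II_{x,X}|=N-n-1$ \emph{exactly} (this is \cite[Thm. 2.3(1)]{Rus}, and it is the only place the hypothesis $\delta\geq 1$ enters). Once $|II_{x,X}|$ is a single quadric, this formula instantly gives $N=n+1$, so $X\subset\p^{n+1}$ is a hypersurface of some degree $d$; adjunction then yields $-K_X=(n+2-d)H$ with no appeal to $\Pic(X)$, and the count $\dim Y_x=-K_X\cdot l_x-2=n-2$ forces $d=2$. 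Your substitute for this chain --- Kobayashi--Ochiai --- requires the global identity $-K_X=nH$, and the route you propose to it (``$\Pic(X)\simeq\Z\langle H\rangle$ via standard Mori-theoretic arguments'') is not available under the stated hypotheses: the lemma only assumes $\delta\geq 1$, so Proposition \ref{LQELCC} does not apply, and the claim is actually false in the boundary case $n=2$, where the conclusion of the lemma is the quadric surface, of Picard number $2$. As written, the proposal therefore does not close; it also never explains how you would recover $N=n+1$ for the given embedding, which is part of the statement.

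Two further slips. The smoothness argument for $Y_x$ is incorrect: an irreducible quadric that is smooth on a dense open set need not have maximal rank, since a quadric cone is smooth away from its vertex; and freeness of general lines only controls $Y_x$ at its general point. Fortunately, smoothness of $Y_x$ is not an ingredient but a consequence: once one knows $X\subset\p^{n+1}$ is a hypersurface of degree $2$, it is smooth because $X$ is smooth by standing hypothesis, and then $Y_x$ is the smooth quadric of lines through a point of a smooth quadric. The correct order of deductions is: quadric $Y_x$ and $N=n+1$ simultaneously from the dimension of $|II_{x,X}|$; then $\deg X=2$ by adjunction; then smoothness of everything for free.
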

\begin{proof} Since $\delta\geq 1$, the second fundamental form $|II_{x,X}|\subseteq|\O_{\p^{n-1}}(2)|$
is a linear system of quadrics  of dimension $N-n-1$ (see for
example \cite[Thm. 2.3 (1)]{Rus}).
 Since $Y_x\subset\p^{n-1}$ is
 contained in the base locus scheme of $|II_{x,X}|$ and since it is a non-degenerate hypersurface,
we obatain that $Y_x\subset\p^{n-1}$ is a quadric hypersurface and that $N=n+1$, i.e. $X\subset\p^{n+1}$
is a  hypersurface. Let $l_x\subset\ X$ be a line passing through $x$.
Reasoning as in the proof of Lemma \ref{Yx} we get, by adjunction,
$$n-2=\dim(Y_x)=-K_X\cdot l_x-2=-(\deg(X)-n-2)-2,$$
that is $\deg(X)=2$ as claimed.
\end{proof}
\medskip

We now prove  a substantial  improvement of the Main Theorem 0.2
of \cite{KS}, where the same claim is proved under the  stronger
assumption that a general hyperquadric is smooth and that $m \geq
3n/5+1$ if $n=5, 6$ or $ 10$ and $m \geq 3n/5$ otherwise.
\medskip

\begin{Thm}\label{main}
Let $X^n \subsetneq \p^N$ be a smooth non-degenerate variety, which is swept out
by $m$-dimensional hyperquadrics passing through a point.
If $m > [n/2]+1$, then $N=n+1$ and
$X$ is itself a smooth hyperquadric.
\end{Thm}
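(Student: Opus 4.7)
By Lemma \ref{CC} and Corollary \ref{Cor}, the hypothesis $m > [n/2]+1$ (which implies $m\geq 3$ and $m > n/3$) forces $\delta := \delta(X) \geq m$, $X$ Fano with $\Pic(X)=\Z$, and $(n+m)/2 \leq i(X) \leq (n+\delta)/2$. Lemma \ref{Yx} then provides $Y_x \subset \p^{n-1}$ smooth, irreducible, non-degenerate with $SY_x = \p^{n-1}$ and $\dim Y_x = i(X)-2$. By Lemma \ref{Yxquadric} combined with the Kobayashi--Ochiai characterization of quadrics (and $X \neq \p^n$ by non-degeneracy), the theorem reduces to showing $i(X) = n$, equivalently $\dim Y_x = n-2$.

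My plan is to first promote $X$ to an LQEL-manifold of type $\delta$. Starting from the conic $C \subset Q_x \subset X$ through $x$ and $z$ constructed in the proof of Lemma \ref{CC}, the ampleness of $N_{C|X}$ allows one to deform $C$ so that its endpoints move freely over $X\times X$. Following the ideas of \cite{IR2} and \cite{Rus}, one analyzes the entry locus through two general points $x',y'$ via the tangential projection from $x'$ and the second fundamental form, showing that the large secant defect $\delta \geq m$ forces this entry locus to be an irreducible quadric of dimension $\delta$ contained in $X$. Thus $X$ becomes LQEL of type $\delta$.

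Once $X$ is LQEL of type $\delta$, the Divisibility Theorem (second part of Theorem \ref{induction}) asserts $2^r \mid n-\delta$ where $\delta \in \{2r+1, 2r+2\}$. Since $\delta \geq m > [n/2]+1$, one has $r \geq \lfloor ([n/2]+1)/2 \rfloor$, so $2^r$ is at least of order $2^{n/4}$, whereas $n-\delta \leq \lceil n/2 \rceil - 2 < n/2$. A direct case analysis (verified by hand for small $n$ and by exponential-versus-linear growth for large $n$) forces $n-\delta = 0$, i.e., $\delta = n$. For an LQEL-manifold of type $\delta = n = \dim X$, the $n$-dimensional quadric through two general points has the same dimension as $X$ and hence equals $X$, so $X$ itself is a hyperquadric; combined with non-degeneracy, $X \subset \p^{n+1}$, concluding the proof.

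The hardest step is establishing the LQEL property of $X$ from the weaker fixed-point sweeping hypothesis. This requires a careful entry-locus analysis using the ideas of \cite{IR2} and \cite{Rus}: one has to leverage the large secant defect $\delta$ together with the tangential projection at a general point to show that the generic entry locus of $X$ is a quadric, even though a priori one only knows that the sweeping family passes through the particular point $z$.
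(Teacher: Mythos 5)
There is a genuine gap at the step you yourself identify as the hardest one: promoting $X$ to an $LQEL$-manifold of type $\delta$. You assert that ``following the ideas of \cite{IR2} and \cite{Rus}, one analyzes the entry locus\ldots showing that the large secant defect $\delta\geq m$ forces this entry locus to be an irreducible quadric of dimension $\delta$,'' but no argument is given, and none of the cited tools delivers this. Conic-connectedness plus $\delta\geq m$ does not imply the $LQEL$ property (the paper itself points out that smooth cubic hypersurfaces are CC but not $LQEL$). The criterion from \cite[Prop.\ 3.2]{IR2} that the paper uses elsewhere (in the Proposition treating $m=[n/2]+1$) requires the \emph{equality} $-K_X\cdot C=n+\delta$ for a general conic, whereas Lemma \ref{CC} only yields $n+\delta\geq -K_X\cdot C\geq n+m$; and the left-hand inequality is genuinely strict in the relevant examples, since for the quadric $X=Q^n$ swept out by $m$-quadrics with $m=[n/2]+2$ one has $\delta=n>m$. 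So you cannot conclude $X$ is $LQEL$ of type $\delta$ this way; proving that the generic entry locus is a $\delta$-dimensional quadric from the fixed-point sweeping hypothesis is essentially as hard as the theorem itself. (Granting $LQEL$, your divisibility endgame is sound --- it is in substance the paper's Theorem \ref{quadricLQEL} --- but the hypothesis it needs is exactly what is missing.)

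The paper's actual proof avoids the entry-locus analysis entirely and works inside $Y_x\subset\p^{n-1}$: since $z$ is a smooth point of $Q_x$ for general $x$, the lines of $Q_x$ through it are parametrized by an $(m-2)$-dimensional quadric $\tilde Q_x\subset Y_x$, which is a hypersurface in its linear span of dimension $m-2>[(n-2)/2]$. Zak's \cite[Corollary I.2.20]{Zak} then forces the non-degenerate variety $Y_x\subset\p^{n-1}$ to be a hypersurface, and Lemma \ref{Yxquadric} (the second fundamental form argument) shows $Y_x$ is a quadric and $X\subset\p^{n+1}$ is a quadric hypersurface. Your opening reduction to $\dim Y_x=n-2$ via Lemmas \ref{Yx} and \ref{Yxquadric} is correct and is in fact the paper's framework; what is missing is a valid argument forcing $\dim Y_x=n-2$, for which the paper's key input is Zak's theorem rather than the Divisibility Theorem.
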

\begin{proof}
The condition $m > [n/2]+1$ implies   $m \geq 3$. By Lemma
\ref{Yx} we know that $Y_x\subset \pit^{n-1}$ is a smooth
non-degenerate variety.  Reasoning as in the proof of Lemma
\ref{CC} we can suppose that,
 for $x\in X$ general,
$z$ is a smooth point of $Q_x$, so  that lines on the quadric
$Q_x$ passing through $z$ are parameterized by an
$(m-2)$-dimensional quadric hypersurface
$\tilde{Q}_x\subset\p^{n-1}$. Clearly   $ \tilde{Q}_x \subset Y_x
$.  By assumption, $m-2 > [(n-2)/2]$, so $Y_x\subset\p^{n-1}$
contains a high dimensional variety which is a hypersurface in its
linear span in $\p^{n-1}$. Then \cite[Corollary I.2.20]{Zak}
implies that $Y_x\subset\p^{n-1}$ is itself a hypersurface and the
conclusion now follows from Lemma \ref{Yxquadric}.
\end{proof}

The following corollary is analogue to results in \cite{Ein},
\cite{Wis} and \cite{Sat}, where they considered linear subspaces
instead of quadric subvarieties.
\begin{Cor}
Let $X \subset \pit^N$ be a smooth non-degenerate variety of
dimension $n$ and $Q \subset X$ a smooth quadric subvariety of
dimension $m$ whose normal bundle $N_{Q|X}$ is isomorphic to
$\0_Q(1)^{\oplus n-m}.$  If $m > [n/2]+1$, then $X$ is a
hyperquadric.
\end{Cor}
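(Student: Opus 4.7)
The strategy is to reduce the statement to Theorem~\ref{main} by exhibiting $X$ as a variety swept out by $m$-dimensional hyperquadrics passing through a common point. Fix any $z\in Q$; then since $m>[n/2]+1$, Theorem~\ref{main} will immediately give that $X\subset\p^{n+1}$ is itself a smooth hyperquadric, provided I produce a family of such quadrics containing $Q$ and all passing through $z$.

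To construct the family, I will examine the Hilbert scheme $\mathcal{H}_z$ of subschemes of $X$ deforming $Q$ and containing $z$. Its tangent space at $[Q]$ is $H^0(Q,\,N_{Q|X}\otimes\mathcal{I}_{z,Q})$ and its obstruction lives in $H^1$ of the same sheaf, where $\mathcal{I}_{z,Q}$ is the ideal sheaf of $z$ in $Q$. Using the hypothesis $N_{Q|X}\cong \mathcal{O}_Q(1)^{\oplus n-m}$, the short exact sequence
\[
0\to \mathcal{O}_Q(1)\otimes\mathcal{I}_{z,Q}\to \mathcal{O}_Q(1)\to \mathcal{O}_Q(1)|_z\to 0,
\]
combined with the vanishing $H^1(Q,\mathcal{O}_Q(1))=0$ (valid since $m\geq 2$) and the surjectivity of global sections onto the fibre at $z$, gives the obstruction vanishing, so $\mathcal{H}_z$ is smooth at $[Q]$.

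Next I will check that the evaluation map $\mathrm{ev}\colon \mathcal{U}\to X$ from the universal family is dominant. At a point $(y,[Q])$ with $y\in Q\setminus\{z\}$, the differential of $\mathrm{ev}$ surjects onto $T_yX$ as soon as $N_{Q|X}\otimes \mathcal{I}_{z,Q}$ is globally generated at $y$; since $N_{Q|X}$ is a direct sum of copies of $\mathcal{O}_Q(1)$, this reduces to exhibiting a hyperplane section of $Q\subset \p^{m+1}$ vanishing at $z$ but not at $y$, which is trivial because $y\neq z$. Combined with openness of smoothness in the Hilbert scheme and the fact that a smooth irreducible subvariety of $\p^N$ of dimension $m$ and degree $2$ must be a quadric hypersurface in its $(m+1)$-dimensional linear span, this shows that the general member $Q'\in \mathcal{H}_z$ is an irreducible reduced $m$-dimensional hyperquadric through $z$, and that such quadrics sweep out $X$. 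Theorem~\ref{main} then applies. The main potential obstacle is the dominance step, but as sketched it follows routinely from the explicit splitting of the normal bundle.
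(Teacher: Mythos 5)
Your proposal is correct and follows essentially the same route as the paper: both deform $Q$ inside $X$ while fixing a point, use the splitting $N_{Q|X}\cong\O_Q(1)^{\oplus n-m}$ to get unobstructedness and surjectivity of the evaluation differential away from the fixed point, and then invoke Theorem~\ref{main}. The only cosmetic difference is that you work with the Hilbert scheme of deformations through $z$, whereas the paper parametrizes morphisms $\Mor(Q,X;q)$ fixing $q$ (hence the extra exact sequence involving $T_Q\otimes\I_q$ there); the two bookkeeping devices carry the same content.
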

\begin{proof}
Let $\I_q$ be the ideal sheaf of a point $q \in Q$. By the exact
sequence $0 \to N_{Q|X} \otimes \I_q \to N_{Q|X} \to N_{Q|X, q}
\to 0$, we get $H^1(Q, N_{Q|X} \otimes \I_q)=0$, since $N_{Q|X}
\simeq \0_Q(1)^{\oplus n-m}$ is globally generated and $H^1(Q,
\0_Q(1))=0$. Similarly, since $T_Q$ is globally generated  and
$H^1(Q, T_Q)=0$, we obtain $H^1(Q, T_Q \otimes \I_q)=0$.  Note
that the following sequence is exact: $$ 0 \to T_Q \otimes \I_q
\to T_X|_Q \otimes \I_q \to N_{Q|X} \otimes \I_q \to 0.$$ The long
exact sequence of cohomology gives $H^1(Q, T_X|_Q \otimes \I_q)=0$
and the following sequence is exact:
\begin{equation} \label{exa}
0 \to H^0(Q, T_Q \otimes \I_q) \to H^0(Q, T_X|_Q \otimes \I_q) \to
H^0(Q, N_{Q|X} \otimes \I_q) \to 0.
\end{equation}
Let $\Mor(Q, X;q)$ be the variety parameterizing morphisms from
$Q$ to $X$ fixing the point $q$. Then it is smooth at $\iota: Q
\to X$, the natural inclusion.

Consider the  evaluation  map  $ev: Q \times \Mor(Q, X; q) \to X$.
Take a point $p \in Q-\{q\}$.
The tangent map to $ev$ at point $(p, \iota)$ is $$T_p Q \oplus
H^0(Q, T_X|_Q \otimes \I_q) \to T_{p, X}$$  given by  $$(u,
\sigma) \mapsto T_p\iota(u) + \sigma(p)= u + \sigma(p).$$
Thus the image contains $T_pQ$. To show it is surjective, we just need to
show that the composition map  $H^0(Q, T_X|_Q \otimes \I_q) \to T_{p, X} \to N_{p,Q|X}, \sigma
\mapsto [\sigma(p)]$ is surjective.
 By the exact sequence \eqref{exa}, it is enough to show that  $H^0(Q, N_{Q|X} \otimes \I_q) \to N_{p, Q|X}$
is surjective, i.e. $H^0(Q, \0_Q(1) \otimes \I_q) \to k_p$ is surjective. This is immediate from
the very ampleness of $\0_Q(1)$.

 In particular, this implies that the map $ev$ is smooth
at points $(p, \iota)$ for $p \neq q$. Thus the deformations of
$Q$ while fixing $q$ dominant $X$. Now we can apply the precedent
theorem to conclude.
\end{proof}

Next we will consider the case $m=[n/2]+1$ with $n \geq 3$.
\begin{Prop}
If $N \geq 3n/2$ and $m = [n/2]+1$ with $n \geq 3$, then $X$ is
projectively isomorphic to one of the following:

i) the Segre 3-fold $\pit^1 \times \pit^2 \subset \pit^5$;

 ii) the
Pl\"ucker embedding $\mathbb{G}(1, 4) \subset \pit^9$;

 iii) the
10-dimensional spinor variety $S^{10} \subset \pit^{15}$;

 iv) a
general hyperplane section of ii) or iii).
\end{Prop}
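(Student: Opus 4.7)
The plan follows the strategy of Theorem~\ref{main}, but now has to deal with the borderline case of Zak's Corollary~I.2.20. First, assuming $m\geq 3$ (i.e.\ $n\geq 4$), Corollary~\ref{Cor} gives $\Pic(X)\simeq\Z\langle\O(1)\rangle$ and $i(X)\geq(n+m)/2$, and Lemma~\ref{Yx} says that $Y_x\subset\p^{n-1}$ is smooth, irreducible, non-degenerate of dimension $i(X)-2$ with $SY_x=\p^{n-1}$. Exactly as in the proof of Theorem~\ref{main}, the lines of $Q_x$ through the smooth point $z\in Q_x$ form a quadric hypersurface $\tilde{Q}_x\subset Y_x$ of dimension $m-2=[(n-2)/2]$ inside its linear span $\p^{m-1}\subset\p^{n-1}$. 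The crucial difference from Theorem~\ref{main} is that the hypothesis of \cite[Cor.~I.2.20]{Zak} is now satisfied with equality rather than strict inequality.

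At the borderline one still has a dichotomy. In the first alternative $Y_x\subset\p^{n-1}$ is itself a hypersurface, and then Lemma~\ref{Yxquadric} forces $X\subset\p^{n+1}$ to be a smooth hyperquadric; but this is incompatible with $N\geq 3n/2$ as soon as $n\geq 3$. So I may assume that $Y_x$ has codimension at least two in $\p^{n-1}$ and contains the non-degenerate quadric $\tilde{Q}_x$ of exactly half its own dimension. To classify such pairs $(Y_x,\tilde{Q}_x\subset Y_x)$, I would invoke Theorem~\ref{induction}: $Y_x$ is itself an $LQEL$-manifold of type $\delta(X)-2$ and dimension $(n+\delta(X))/2-2$, with $SY_x=\p^{n-1}$, and its secant defect is constrained by the divisibility assertion of Theorem~\ref{induction}. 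Combined with the bounds coming from Proposition~\ref{LQELCC}, this cuts the possible $Y_x$ down to a short list, which turns out to coincide with the varieties of minimal rational tangents of the four target varieties (so e.g.\ $Y_x\simeq\p^1\times\p^2\subset\p^5$ when $X\simeq\mathbb{G}(1,4)\subset\p^9$, and $Y_x\simeq\mathbb{G}(1,4)\subset\p^9$ when $X\simeq S^{10}\subset\p^{15}$, with the hyperplane-section cases arising when $Y_x$ is itself a smooth hyperplane section of one of these).

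Once $Y_x$ is identified, a Cartan--Fubini / VMRT reconstruction along the lines of \cite{IR,IR2,Rus} recovers $X$ up to projective isomorphism and yields the four stated cases. The residual case $n=3$ (so $m=2$, outside the scope of Corollary~\ref{Cor}) has to be treated by hand: Lemma~\ref{CC} gives $\delta\geq 2$, whence $\delta=2$, and therefore $\dim SX=5$; combining $N\geq 5$ with the conically connected structure from Lemma~\ref{cclem} singles out the Segre 3-fold $\p^1\times\p^2\subset\p^5$. The main obstacle is the classification step in the equality regime: producing the short list of admissible $Y_x$ at the borderline of Zak's tangency theorem and then reconstructing $X$ as the appropriate Scorza-type variety essentially requires Russo's classification of $LQEL$-manifolds of small defect and will draw heavily on \cite{Rus} and \cite{IR2}. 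The hypothesis $N\geq 3n/2$ enters precisely to rule out the hyperquadric branch of the dichotomy.
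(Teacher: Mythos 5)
Your route diverges substantially from the paper's, and it contains a genuine gap. The paper's proof is a short numerical argument that never touches $Y_x$: by Lemma \ref{CC}, $\delta\geq m=[n/2]+1>n/2$, so Zak's linear normality theorem forces $SX=\p^N$, hence $\delta=2n+1-N$; combined with $N\geq 3n/2$ this pins down $N=2n-[n/2]$ and $\delta=m$ exactly. Equality $\delta=m$ turns the chain $n+\delta\geq -K_X\cdot C\geq n+m$ into an equality, and by \cite[Prop.~3.2]{IR2} the equality $-K_X\cdot C=n+\delta$ implies that $X$ is an $LQEL$-manifold of type $\delta=m\geq n/2$; the list then comes straight from Russo's classification \cite[Cor.~3.1]{Rus} applied to $X$ itself.

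The gap in your argument is that you invoke Theorem \ref{induction} to conclude that $Y_x$ is an $LQEL$-manifold of type $\delta(X)-2$, but that theorem applies only to $LQEL$-manifolds, and nothing in your argument establishes that $X$ is $LQEL$: being swept out by quadrics through the single point $z$ does not give quadrics through two \emph{general} points, which is exactly the non-trivial content here. This is precisely where the hypothesis $N\geq 3n/2$ must be used (to force $\delta=m$ and hence equality in Proposition \ref{LQELCC}); you instead use $N\geq 3n/2$ only to discard the hyperquadric branch of the dichotomy, so the $LQEL$ property is assumed rather than proved. The subsequent steps are also underdetermined: the ``short list'' of admissible $Y_x$ at the borderline of \cite[Cor.~I.2.20]{Zak} is asserted rather than derived, and the Cartan--Fubini/VMRT reconstruction of $X$ from $Y_x$ is not available in the cited references in the form you need. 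Since the endpoint of your plan is in any case Russo's classification, the detour through $Y_x$ buys nothing; the efficient path is to prove $\delta=m$ first and apply \cite[Cor.~3.1]{Rus} directly to $X$. (Your separate treatment of $n=3$ is in the right spirit but also ends in an unproved claim that conic-connectedness ``singles out'' the Segre threefold.)
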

\begin{proof}
As $\delta \geq m > n/2$,  by Zak's linear normality theorem
(\cite{Zak}), we have $SX = \pit^N$. Thus $\delta = 2n+1 -N \geq
[n/2]+1$, which gives that $N \leq 2n-[n/2].$ By hypothesis, $N
\geq 3n/2$, thus $N = 2n-[n/2]$, which gives  $\delta=m$. As a
consequence,  $-K_X \cdot C = n+ \delta$ for a generic conic $C$,
which implies that $X$ is a LQEL manifold of type $m$ by
\cite[Prop. 3.2]{IR2}. Now the claim is given  by the
classification result in \cite[Cor. 3.1]{Rus}.
\end{proof}
\begin{Rque}\upshape
Here we give an outline of an approach to classify such varieties
$X$ with $m=[n/2]+1$, based on Hartshorne's conjecture.
 We may assume that $Y_x$ is not
a hypersurface in $\pit^{n-1}$, i. e. $n-2-\dim Y_x \geq 1$. By
the proof of Prop. I.2.16 [Zak], for any hyperplane $H \subset
\pit^{n-1}$ containing the linear span of $\tilde{Q}_x$ and $T_yY$
for some $y \in Y$, $H$ is tangent to $Y_x$ along some variety $Z
\subset \tilde{Q}_x$. The dimension of $Z$ is bounded by
$$
n-2- \dim Y_x \geq \dim Z \geq 2(m-2) - \dim Y_x=2[n/2]-2-\dim
Y_x.
$$

Consider the Gauss map: $\gamma_{n-2}:  \mathcal{P}_{n-2} \to
(\pit^{n-1})^*$ (cf. I.2 [Zak]). By definition,
$\gamma_{n-2}^{-1}(H)$ contains the variety $Z \times \{ H\}$.
 If $2n-2 < 3 i(X)-6$, then Harsthorne's conjecture implies that
$Y_x \subset \pit^{n-1}$ is a complete intersection. By Prop. I.
2.10 [Zak], the map $\gamma_{n-2}$ is finite, which gives $\dim Z
= 0$. We deduce that $n$ is odd and $i(X)=n-1$, so $X$ is a smooth
Del Pezzo varieties, which have been completely classified.

Thus we may assume $2n-2 \geq 3 i(X)-6$, which gives $2n+4 \geq
3/2(n+m) = 3/2(n+[n/2]+1)$. This implies that $n \leq 11$ or
$n=13$. When $n \leq 11$, we obtain that $i(X) \geq n-2$, thus $X$
is a Fano variety with $Pic \simeq \zit$ and of coindex at most 3,
i. e. $X$ is either a Del Pezzo variety or a Mukai variety. The
case $n=13$ with $i(X) = 10$ requires a more detailed study.
\end{Rque}

\section{LQEL-manifolds with  large secant defects}

The idea contained in the proof of Theorem \ref{main} can be
combined with the Divisibility Theorem of \cite{Rus}, obtaining
new constraints for the existence of $LQEL$-manifold with large
secant defects.
\medskip

Let $X\subset\p^N$ be a $LQEL$-manifold of type $\delta \geq 2k+1$. We define
inductively a sequence of smooth varieties: $Y_1: = Y_x\subset\p^{n-1}$ and let
$Y_{j+1}\subset\p^{\dim(Y_j)-1}$ be  the Hilbert scheme of lines on $Y_j$ passing through
a  general point of it, for $k-1 \geq j \geq 1$. By the previous
theorem, we know that $Y_j\subset\p^{\dim(Y_{j-1})-1}$ is a $LQEL$-manifold of type
$\delta-2j$ with $SY_j=\p^{\dim(Y_{j-1})-1}$. Furthermore for $j \leq k-1$, $Y_j\subset\p^{\dim(Y_{j-1})-1}$ is a Fano variety
with $\Pic(Y_j) = \zit \langle \0(1) \rangle$ (\cite{Rus}).
Let $i_j$ be the index of $Y_j$ and $i_0=(n+\delta)/2$ the
index of $X$. The following lemma can also be deduced from the Divisibility Theorem
cited above.
\medskip

\begin{Lem} \label{index}
$$ i_j = \cfrac{n-\delta}{2^{j+1}} + \delta - 2j,  \  0 \leq j \leq k-1.$$
\end{Lem}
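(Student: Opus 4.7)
The plan is to prove this by induction on $j$, with the auxiliary induction carried out on the dimensions $n_j := \dim(Y_j)$. The key inputs are Theorem \ref{induction}, which tells us how the type and dimension drop when passing from $Y_{j-1}$ to $Y_j$, together with the Picard rank one property which lets us read off the index of an $LQEL$-manifold directly from its type and dimension.

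First I would observe that since $\delta \geq 2k+1$ and $j \leq k-1$, we have $\delta - 2j \geq 3$ for all relevant $j$; thus Theorem \ref{induction} applies at every step and $Y_j \subset \p^{n_{j-1}-1}$ is an $LQEL$-manifold of type $\delta-2j$ with $\Pic(Y_j) = \Z\langle \O(1)\rangle$. Theorem \ref{induction} gives directly the recursion
\[
n_j = \frac{n_{j-1} + (\delta - 2(j-1))}{2} - 2, \qquad n_0 = n.
\]

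Next I would prove by induction on $j$ that
\[
n_j = \frac{n-\delta}{2^{j}} + \delta - 2j, \qquad 0 \leq j \leq k-1.
\]
The base case $j=0$ is trivial, and the inductive step is a one-line computation substituting the inductive hypothesis into the recursion above. Once this closed form is established, I would compute $i_j$ from the fact that $Y_j$ is $LQEL$ of type $\delta-2j$ with Picard rank one: a general conic $C$ on $Y_j$ lies inside a smooth $(\delta-2j)$-dimensional quadric contained in $Y_j$, so $-K_{Y_j} \cdot C = n_j + (\delta-2j)$ by Proposition \ref{LQELCC}, and since $C \cdot H = 2$ we deduce
\[
i_j = \frac{n_j + \delta - 2j}{2}.
\]
Substituting the formula for $n_j$ yields the claim.

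The main obstacle, such as it is, lies in correctly identifying the type of $Y_j$ at each step (so that Theorem \ref{induction} can be applied again), and in verifying that the hypothesis $\delta - 2j \geq 3$ is maintained throughout the recursion; both follow immediately from $\delta \geq 2k+1$ and $j \leq k-1$. After that the proof is purely a matter of solving a linear recurrence, and no further geometry is needed.
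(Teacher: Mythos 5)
Your proof is correct and follows essentially the same route as the paper: the paper also combines Theorem \ref{induction} with the relation $2i_j=\dim Y_j+\delta(Y_j)$, merely writing the resulting linear recurrence directly in the indices (via $\dim Y_j=i_{j-1}-2$) rather than first solving for the dimensions $n_j$ and converting at the end. The one small imprecision is your appeal to Proposition \ref{LQELCC}, which as stated gives only the inequalities $n+\delta\geq -K_X\cdot C\geq n+1$; the equality $2i_j=n_j+\delta(Y_j)$ that you actually need holds for $LQEL$-manifolds of type $\geq 3$ and follows from Theorem \ref{induction} together with $\dim Y_j=i_{j-1}-2$, which is exactly how the paper obtains it.
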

\begin{proof}
By Theorem \ref{induction}, we have $$2 i_j = \dim  Y_{j} +
\delta(Y_{j}) = i_{j-1}-2 + \delta - 2j,$$ which gives
$2(i_j+2j-\delta) = i_{j-1}+2(j-1) - \delta.$ We deduce that $
i_j+2j-\delta = (i_0 - \delta)/2^j$, concluding the proof.
\end{proof}
\medskip

\begin{Thm}\label{quadricLQEL}
Let $X\subset\p^N$ be an $n$-dimensional $LQEL$-manifold of type $\delta$. If
$$\delta > 2[\log_2 n]+2 \ {\text or} \  \delta > \min_{k \in \nit} \{ \cfrac{n}{2^{k-1}+1} + \cfrac{2^{k}
k}{2^{k-1}+1} \},$$ then $N=n+1$ and $X\subset\p^{n+1}$ is a quadric hypersurface.
\end{Thm}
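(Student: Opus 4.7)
The plan is to reduce both alternative hypotheses to results already at hand: the Divisibility Theorem packaged inside Theorem \ref{induction}, and the characterisation of quadrics in Theorem \ref{main}.

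Assume first that $\delta > 2[\log_2 n]+2$. Writing $\delta=2r_X+1$ or $\delta=2r_X+2$, the hypothesis gives $r_X \geq [\log_2 n]+1$, hence $2^{r_X} > n$. Since $X \subsetneq \p^N$ is non-degenerate we have $\delta \leq n$, so $n-\delta$ is a nonnegative integer strictly smaller than $2^{r_X}$. The divisibility statement $2^{r_X} \mid (n-\delta)$ then forces $n=\delta$, and Proposition \ref{LQELCC} gives $i(X)=n$; by Kobayashi--Ochiai, $X$ is a smooth hyperquadric in $\p^{n+1}$.

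For the second bound, fix an integer $k \geq 1$ attaining the minimum and iterate the construction of Theorem \ref{induction} to build a chain $Y_0:=X, Y_1, \ldots, Y_{k-1}$, where each $Y_j \subset \p^{\dim Y_{j-1}-1}$ is an $LQEL$-manifold of type $\delta_j := \delta-2j$, has $SY_j$ filling its ambient projective space, and by direct induction satisfies $\dim Y_j = (n-\delta)/2^j + \delta - 2j$. Being $LQEL$ of positive type, $Y_{k-1}$ is swept out by a family of $\delta_{k-1}$-dimensional hyperquadrics through a general point. A short algebraic rearrangement shows that the hypothesis $\delta(2^{k-1}+1) > n+2^k k$ is equivalent to $\delta_{k-1} > \dim Y_{k-1}/2 + 1$, which in turn implies the integer version $\delta_{k-1} > [\dim Y_{k-1}/2]+1$ required by Theorem \ref{main}. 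Applying that theorem identifies $Y_{k-1}$ with a smooth quadric in $\p^{\dim Y_{k-1}+1}$, equivalently a non-degenerate hypersurface in $\p^{\dim Y_{k-2}-1}$.

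To ascend back to $X$, apply Lemma \ref{Yxquadric} to the pair $(Y_{k-2}, Y_{k-1})$: since $Y_{k-1}$ is a non-degenerate hypersurface in the ambient $\p^{\dim Y_{k-2}-1}$ of its parent, the lemma identifies $Y_{k-2}$ with a smooth quadric. Iterating this ascent through $Y_{k-3}, \ldots, Y_0=X$ finishes the proof. The step most deserving of care is verifying that the numerical hypothesis delivers $\delta \geq 2k-1$, which is what is needed both to carry out the initial $k-1$ inductive steps of Theorem \ref{induction} (each requiring $\delta_j \geq 3$) and to invoke Lemma \ref{Yxquadric} in the ascent (which requires $\delta(Y_j) \geq 1$ and $\dim Y_j \geq 2$ at every level); both are clean consequences of the inequality satisfied by the optimizing $k$.
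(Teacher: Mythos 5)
Your proof is correct and follows essentially the same route as the paper: the first alternative is handled by the Divisibility Theorem exactly as in the text, and for the second your application of Theorem \ref{main} to $Y_{k-1}$ merely repackages the paper's direct use of Zak's Corollary I.2.20 on $Y_k$ followed by the same ascent via repeated application of Lemma \ref{Yxquadric}. (One cosmetic slip: Proposition \ref{LQELCC} only gives $i(X)\leq n$ when $\delta=n$; but $\delta=n$ already forces $X$ to coincide with one of the $n$-dimensional quadrics it contains, so no appeal to Kobayashi--Ochiai is needed there.)
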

\begin{proof}
If $\delta > 2[\log_2 n]+2$, then $n < 2^r$, where $r
=[(\delta-1)/2]$. By Theorem \ref{induction}, $2^r$ divides
$n-\delta$. This is possible only if $\delta = n$. Thus $X$ is a
hyperquadric. Now assume we have the second inequality.  Note that
for a fixed $n$, the minimum $\min_{k \in \nit} \{
\cfrac{n}{2^{k-1}+1} + \cfrac{2^{k} k}{2^{k-1}+1} \}$ is achieved
for some $k \leq n/2$, so we may assume that for some $k \leq
n/2$, we have $\delta > \cfrac{n}{2^{k-1}+1} + \cfrac{2^{k}
k}{2^{k-1}+1}  = 2k + \cfrac{n-2k}{2^{k-1}+1} \geq 2k,$ so that
 $\delta \geq 2k+1$.  Now we can consider the variety
$Y_{k} \subset \pit^{\dim Y_{k-1} - 1}$. Note that $\dim Y_{k} =
i(Y_{k-1})-2$ and
$$
\dim Y_{k-1} = 2 i_{k-1} - \delta(Y_{k-1}) =
\cfrac{n-\delta}{2^{k-1}} + \delta - 2k +2.
$$

On the other hand, $Y_k\subset\p^{\dim(Y_{k-1})-1}$ is non-degenerate and it contains a
hyperquadric of dimension $\delta - 2k$, which is strictly bigger
than $(\dim Y_{k-1} -2)/2$ under our assumption on $\delta$. Now
\cite[Corollary I.2.20]{Zak} implies that  $Y_k\subset\p^{\dim(Y_{k-1})-1}$ is a hypersurface. Since it is
a non-degenerate hypersurface by Theorem \ref{induction}, a repeated application of Lemma
\ref{Yxquadric} yields the conclusion.
\end{proof}
\medskip

We now state a sharper Linearly Normality Bound for
$LQEL$-manifolds, see \cite[II.2.17]{Zak}. Moreover, in \cite[Cor.
3.1, Cor. 3.2]{Rus}   Russo has classified $n$-dimensional
$LQEL$-manifolds of type $\delta \geq  n/2$. Combining these
results with  the bound on $\delta$ in the Theorem
\ref{quadricLQEL} we are able to classify  the extremas cases of
the bounds.
\medskip

\begin{Cor}\label{boundelta} Let $X\subset\p^N$ be a $LQEL$-manifold of type $\delta$, not a quadric
hypersurface. Then $$\delta\leq\min_{k \in \nit} \{ \cfrac{n}{2^{k-1}+1} + \cfrac{2^{k}
k}{2^{k-1}+1} \}\leq
\cfrac{n+8}{3}$$
and $$N\geq\dim(SX)\geq 2n+1-\min_{k \in \nit} \{ \cfrac{n}{2^{k-1}+1} + \cfrac{2^{k}
k}{2^{k-1}+1} \}\geq \frac{5}{3}(n-1).$$

Furthermore $\delta=\frac{n+8}{3}$ if and only if $X\subset\p^N$ is projectively equivalent
to one of the following:
\begin{enumerate}
\item[i)] a smooth 4-dimensional quadric hypersurface $X\subset\p^5$;

\item[ii)] the $10$-dimensional spinor variety
$S^{10}\subset\p^{15}$;

\item[iii)] the $E_6$-variety $X\subset\p^{26}$ or one of its isomorphic projection in $\p^{25}$;

\item[iv)] a $16$-dimensional linearly normal rational variety
$X\subset\p^{25}$, which is a Fano variety of index $12$ with
$SX=\p^{25}$, dual defect $\defect(X)=0$ and such that the base locus
scheme $C_x\subset\p^{15}$  of $|II_{x,X}|$ is
the union of $10$-dimensional spinor variety $S^{10}\subset\p^{15}$
with  $C_pS^{10}\simeq\p^{7}$, $p\in\p^{15}\setminus S^{10}$.
\end{enumerate}
\end{Cor}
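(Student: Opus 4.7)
The plan is to derive the inequalities directly from Theorem \ref{quadricLQEL} and to classify the equality case by combining that theorem's bounds (at several values of $k$), the Divisibility Theorem, and Russo's classification of LQEL-manifolds of large secant defect.

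For the first chain of inequalities, the contrapositive of Theorem \ref{quadricLQEL} applied to $X$ (not a quadric by hypothesis) gives $\delta \leq \min_{k \in \nit} \{n/(2^{k-1}+1) + 2^{k} k/(2^{k-1}+1)\}$, and specializing $k=2$ in the minimand produces the numerical bound $(n+8)/3$. The dimension inequalities $N \geq \dim(SX) \geq 5(n-1)/3$ are then immediate from the definition $\delta = 2n+1 - \dim(SX)$ and the elementary calculation $2n+1 - (n+8)/3 = 5(n-1)/3$.

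For the classification of the equality case $\delta = (n+8)/3$, I would first restrict $n$ to a finite list. If $X$ happens to be a quadric hypersurface then $\delta = n$, forcing $n = 4$ and producing case i. Otherwise, Theorem \ref{quadricLQEL}'s bound at $k=1$, namely $\delta \leq n/2 + 1$, combined with $\delta = (n+8)/3$ forces $n \geq 10$; and when $n \geq 13$ (so $\delta \geq 7$) its bound at $k=3$, namely $\delta \leq (n+24)/5$, forces $n \leq 16$. Since $\delta$ must be an integer, $n \equiv 1 \pmod{3}$, so $n \in \{10,13,16\}$ with $\delta \in \{6,7,8\}$ respectively. The Divisibility Theorem (last part of Theorem \ref{induction}) now discards $n=13$: writing $\delta = 7 = 2\cdot 3+1$ one would need $2^{3} = 8 \mid n-\delta = 6$, which fails.

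For the surviving pairs $(n,\delta) \in \{(10,6),(16,8)\}$ one has $\delta \geq n/2$, so Russo's classification \cite[Cor. 3.1, Cor. 3.2]{Rus} of LQEL-manifolds with $\delta \geq n/2$ applies directly: $(10,6)$ gives the $10$-dimensional spinor variety $S^{10}\subset\p^{15}$ (case ii), while $(16,8)$ gives the $E_{6}$-variety $X\subset\p^{26}$ or its isomorphic projection in $\p^{25}$ (case iii), together with the exceptional $16$-dimensional Fano variety of case iv. The converse direction is a routine verification of $\delta$ on each of these explicit varieties. The main obstacle is essentially invisible at this level of the argument: matching the three numerical possibilities left by our elementary bounds to the four explicit projective varieties in the statement, in particular the detailed second-fundamental-form description underlying case iv, is entirely a consequence of Russo's classification, which we invoke as a black box.
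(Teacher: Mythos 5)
Your proof is correct, and for the equality case it takes a genuinely different (though closely related) route to the paper's. The paper proves the inequalities exactly as you do (contrapositive of Theorem \ref{quadricLQEL}, specializing to $k=2$), but for the classification it never bounds $n$ numerically: it substitutes $n=3\delta-8$ into $n-\delta$ and runs the Divisibility Theorem in both parity cases, finding that $\delta=2r_X+1$ forces $2^{r_X}\mid 4r_X-6$ (impossible) while $\delta=2r_X+2$ forces $2^{r_X}\mid 4(r_X-1)$, whence $r_X\in\{1,2,3\}$ and $(n,\delta)\in\{(4,4),(10,6),(16,8)\}$ directly, before invoking \cite[Cor.\ 3.1, Cor.\ 3.2]{Rus}. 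Your route instead pins $n$ down by playing the $k=1$ and $k=3$ bounds of Theorem \ref{quadricLQEL} against $\delta=(n+8)/3$ together with integrality, getting $n\in\{10,13,16\}$, and then uses divisibility only once, to kill $n=13$. The paper's argument is more uniform and self-contained (one tool does all the arithmetic), and it handles the quadric case $(4,4)$ within the same computation rather than as a separate branch; your argument is slightly longer but arguably cleaner at one point, since the paper's claim that $2^{r_X}\mid 4r_X-6$ is ``not possible'' silently relies on discarding $r_X=1$ (where the divisibility does hold but $n-\delta=-2<0$), an edge case your integrality-plus-bounds route never encounters. Both proofs rest on the same two black boxes (the Divisibility Theorem of Theorem \ref{induction} and Russo's classification), and your explicit separation of the quadric hypersurface case is a reasonable way to reconcile the ``not a quadric'' hypothesis of the Corollary with the appearance of $Q^4\subset\p^5$ in the list.
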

\begin{proof} We shall prove only the second part. If $\delta=\frac{n+8}{3}$, then
$n-\delta=\frac{2n-8}{3}$. Suppose $\delta=2r_X+1$, so that $n-\delta=\frac{12r_X-18}{3}$.
By Theorem \ref{induction} we deduce that $2^{r_X}$ should divide $4r_X-6$, which is not possible.

Suppose now $\delta=2r_X+2$, so that
$n-\delta=\frac{12r_X-12}{3}=4(r_X-1)$. Since $2^{r_X}$ has to
divide $4(r_X-1)$, we get $r_X=1,2,3$ and, respectively,  $n=4,
10, 16$ with $\delta=4, 6$, respectively 8. The conclusion follows
from \cite[Cor 3.1 and Cor. 3.2]{Rus}.
\end{proof}
\medskip

Let us observe that Lazarsfeld and Van de Ven posed the question if for an irreducible
smooth projective non-degenerate $n$-dimensional variety $X\subset\p^N$ with $SX\subsetneq\p^N$
the secant defect is bounded, see \cite{LVdV}. This question was motivated by the
fact that for the known examples we have $\delta(X)\leq 8$, the bound being attained for the
sixteen dimensional Cartan variety $E_6\subset\p^{26}$, which is a $LQEL$-variety
of type $\delta=8$. Based on these remarks and on the above results one could naturally formulate
the following problem.
\medskip

{\em Question:} Is a $LQEL$-manifold  $X\subset\p^N$ with $\delta
>8$   a smooth quadric hypersurface?
\medskip

\section{ Acknowledgements} It is my pleasure to thank Prof. F. Russo
for introducing this problem to me during Pragmatic 2006, for his
insight that a simple proof of the main result in \cite{KS} in the
spirit of \cite{IR2}, \cite{Rus} is possible and  also for the
numerous discussions, suggestions and corrections that lead to the
final form of this paper. I'm grateful to Prof. F. L. Zak for his
constant help and suggestions, especially for drawing my attention
to Prop. I.2.16 \cite{Zak}, which is one of the key points of the
proof. I would like to thank J. Caravantes for helpful discussions
on this problem and the organizers of Pragmatic 2006 for the
hospitality in Catania. I'm grateful to the referee for very
helpful suggestions, which greatly improved the present note.

\quad \\
C.N.R.S., Labo. J. Leray, Facult\'e des sciences, Universit\'e de NANTES \\
2, Rue de la Houssini\`ere,  BP 92208,
             F-44322 Nantes Cedex 03 - France \\
fu@math.univ-nantes.fr

\end{document}